\documentclass[a4paper,12pt]{article}
\usepackage[hmargin=2.5cm,vmargin=1.5cm]{geometry}
\usepackage{amsfonts}
\usepackage{amssymb}
\usepackage{amsmath}
\usepackage{amsthm}
\usepackage{amscd}
\usepackage{geometry}
\usepackage{array}
\usepackage{authblk}
\usepackage{float}

\usepackage{pst-node}
\usepackage{tikz-cd}
\usepackage{tikz-qtree}

\usepackage{mathtools}

\newtheorem{theorem}{Theorem}[section]

\newtheorem{remark}{Remark}[section]

\begin{document}

\title{A Banach-Type Fixed Point Theorem via Discrepancy Functionals}

\date{}
\author[]{\small Olaoluwa, Hallowed O.}
\affil[]{\small Department of Mathematics, University of Lagos, Akoka, Nigeria}
\affil[]{\small holaoluwa@unilag.edu.ng}

\maketitle

\begin{abstract}
\noindent
In this paper we establish a Banach-type fixed point theorem for mappings that contract a general nonnegative functional rather than a metric. The principal assumption is that the functional dominates the underlying metric through a simple lower bound, together with a compatibility condition at limits. Our result provides a common framework that recovers recent fixed point theorems in perturbed metric spaces and for polynomial contractions as particular cases, while showing that neither symmetry nor the triangle inequality is essential for the contraction argument. Consequently, the contractive quantity may be interpreted as a general discrepancy or divergence measure rather than a metric. The results highlight that the essential ingredient behind Banach's contraction principle is not the metric structure itself, but the ability of the contracted functional to control the geometry of the underlying space. This perspective provides a simple and flexible framework for obtaining fixed point results for broad classes of non-metrical contractive quantities.
\vspace{5mm}
	
	\noindent{\bf Keywords:} discrepancy functionals; perturbed metrics; polynomial contractions; contractive mappings; fixed point theorem. 
\\

\noindent{\bf MSC 2010 Classification:} 54E35, 26D15, 47H10 
	
\end{abstract}

\section{Introduction}

Since the work of Banach, the contraction mapping principle has become one of the most influential tools in nonlinear analysis. Numerous generalizations have subsequently been developed by modifying either the underlying geometric structure or the form of the contractive condition. Among these are metric-type spaces such as b-metric spaces \cite{B89},\cite{C93}, multiplicative metric spaces \cite{BM},\cite{igez}, generalized metric structures, and many others, together with increasingly sophisticated notions of contractive mappings (see \cite{rhoades},\cite{SUZ}, \cite{WZ}).
\\
\\
A recurring theme throughout this development is that many apparently different fixed point theories are manifestations of a common underlying principle. Identifying this common structure has become an important direction in modern fixed point theory, since unified frameworks not only simplify existing proofs but also clarify the essential assumptions responsible for the existence and uniqueness of fixed points.
\\
\\
The present work continues the author's ongoing research  on unification in fixed point theory. Previous work has sought to demonstrate that several seemingly distinct concepts can be viewed from a common perspective. Examples include the unification of expansive and contractive mappings into hybrid expansive-contractive mappings \cite{inclo}, the interpretation of multipled fixed points as ordinary fixed points in higher-dimensional product spaces \cite{multiplito},\cite{Olaoluwa2015}, and the development of O-metric spaces, which provide a common framework encompassing metric spaces, b-metric spaces, and several related generalized metric structures (see \cite{bookchap},\cite{ourpaper},\cite{ourpaper2}). 
\\
\\
Recently, Jleli and Samet \cite{J25} introduced the notion of perturbed metric spaces, in which the quantity governing the contraction is obtained by perturbing a metric through a nonnegative function. They established a Banach-type fixed point theorem by reducing the problem to the underlying metric. More recently, Jleli, Pacurar and Samet \cite{ETAL25} introduced polynomial contractions, thereby further enlarging the class of contractive expressions. These developments naturally raise the question whether the metric structure of the contracted quantity is really necessary, or is it sufficient that the quantity merely controls the underlying metric.
\\
\\
The purpose of this paper is to answer this question in the affirmative. We replace the perturbed metric by an arbitrary nonnegative functional $A:X \times X \to [0,\infty)$ which is only required to dominate the underlying metric and satisfy a compatibility condition with convergence. No symmetry, triangle inequality, or metric-type axiom is imposed on $A$. Under these assumptions we prove a Banach-type fixed point theorem that contains the recent results on perturbed metric spaces and polynomial contractions as special  cases.

\section{A Fixed Point Theorem via Discrepancy Functionals}

Jleli and Samet \cite{J25} introduced the notion of a perturbed metric as a way of incorporating
measurement errors into the geometry of metric spaces. More precisely, given a nonnegative
perturbation function \(P:X\times X \to [0,\infty)\), a nonnegative function
\(
D:X\times X \to [0,\infty)
\)
is called a \emph{perturbed metric} whenever $D-P$ is a metric on $X$. Their framework defines perturbed convergence, completeness and continuity in terms of the corresponding classical notions in the associated metric space $(X,D-P)$. Their main fixed point theorem may be reformulated as follows.

\begin{theorem}
Let $(X,d)$ be a complete metric space and let
\(
T:X\to X
\)
be a continuous map satisfying
\begin{equation}
d(Tu,Tv)+P(Tu,Tv)
\le
\lambda[d(u,v)+P(u,v)]
\end{equation}
for all $u,v\in X$, where $\lambda\in(0,1)$ and
\(
P:X\times X\to [0,\infty)
\)
is a function. Then $T$ admits a unique fixed point.
\end{theorem}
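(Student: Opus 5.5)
Set $A(u,v)=d(u,v)+P(u,v)$, so the hypothesis reads $A(Tu,Tv)\le\lambda A(u,v)$ for all $u,v\in X$. Since $P\ge 0$, we have $d\le A$. The function $A$ need not be a metric and may fail to be symmetric, so the argument will use only two facts: the contraction for $A$, and the domination $d\le A$.

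For the Picard iteration, fix $x_0\in X$ and set $x_{n+1}=Tx_n$. By induction, $A(x_n,x_{n+1})\le\lambda^nA(x_0,x_1)$, and hence $d(x_n,x_{n+1})\le\lambda^nA(x_0,x_1)$. The triangle inequality for $d$ then gives, for $m>n$,
\[
d(x_n,x_m)\le\sum_{k=n}^{m-1}\lambda^kA(x_0,x_1)\le\frac{\lambda^n}{1-\lambda}A(x_0,x_1).
\]
So $(x_n)$ is $d$-Cauchy, and by completeness $x_n\to x^*$ in $(X,d)$. Continuity of $T$ (with respect to $d$) gives $x_{n+1}=Tx_n\to Tx^*$, and uniqueness of limits gives $Tx^*=x^*$. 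The only point to watch is that we never use a triangle inequality for $A$: the summation is carried out entirely in $d$, after the bound $d\le A$ has been applied to each consecutive step.

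For uniqueness, suppose $Tu=u$ and $Tv=v$. Then $A(u,v)=A(Tu,Tv)\le\lambda A(u,v)$. Because $A(u,v)$ is a finite nonnegative real and $\lambda<1$, this forces $A(u,v)=0$. Hence $d(u,v)\le A(u,v)=0$, so $u=v$.

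The main subtlety is the step identifying the limit as a fixed point, since the hypothesis gives no information on how $P$ behaves under limits. This is exactly why continuity of $T$ is assumed, and using it as above avoids the issue entirely. Without continuity one would need a compatibility condition linking $A$ to $d$-convergence, which is presumably the condition the paper's general theorem imposes.
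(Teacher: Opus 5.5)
Your proof is correct. The paper gives no standalone proof of this statement; it quotes it from Jleli--Samet and later claims to recover it from its Main Theorem by taking $A=d+P$, $\delta=r=1$. Your argument has the same skeleton as that Main Theorem proof: geometric decay of $A(x_n,x_{n+1})$, the domination $d\le A$, summing only in $d$, completeness, and uniqueness from $A(u,v)\le\lambda A(u,v)$. The two arguments differ only at the step that identifies the limit as a fixed point.

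The paper obtains $Tx_n\to Tu$ from its compatibility condition ($x_n\to u\Rightarrow A(x_n,u)\to 0$), via $\delta\, d(Tx_n,Tu)^r\le A(Tx_n,Tu)\le\lambda A(x_n,u)$. You use continuity of $T$ instead, and that is the choice the stated hypotheses actually support. With $A=d+P$, compatibility amounts to $P(x_n,u)\to 0$ whenever $x_n\to u$, and this is not assumed here. So the paper's claimed specialization does not literally apply to this statement.

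Conversely, compatibility together with domination and the contraction already forces $T$ to be continuous, by exactly the displayed estimate. Your version of the argument, with continuity in place of compatibility, therefore covers both this statement and the paper's Main Theorem.
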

\noindent
The above theorem shows that fixed point results may still hold even when the
contractive quantity is no longer a metric in the classical sense. A careful analysis
of the proof reveals that the metric structure of the contractive quantity is not
essential. This leads naturally to the following result.

\begin{theorem}[Main Theorem]\label{main}
Let $(X,d)$ be a complete metric space and let
\(
A:X\times X\to [0,\infty)
\)
be a nonnegative function satisfying
\begin{equation}\label{dominance}
A(x,y)\ge \delta d(x,y)^r
\end{equation}
for all $x,y\in X$ and some constants $\delta,r>0$. Suppose that a mapping
\(
T:X\to X
\)
satisfies
\begin{equation}\label{contractivecond}
A(Tx,Ty)\le \lambda A(x,y)
\end{equation}
for all $x,y\in X$, where $\lambda\in(0,1)$. Assume furthermore that for every $u\in X$,
\begin{equation}\label{compatibility}
x_n\to u
\quad \Longrightarrow \quad
A(x_n,u)\to 0.
\end{equation}
\noindent
Then $T$ admits a unique fixed point.
\end{theorem}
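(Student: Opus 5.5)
We follow the Picard iteration. Fix $x_0\in X$ and set $x_{n}=T^n x_0$. Iterating \eqref{contractivecond} in the second slot only gives
\[
A(x_n,x_{n+1})=A(T^n x_0,T^n x_1)\le \lambda^n A(x_0,x_1),
\]
and more generally $A(x_n,x_{n+k})\le \lambda^n A(x_0,x_k)$. Since no triangle inequality is available for $A$, we work with $d$. By \eqref{dominance},
\[
d(x_n,x_{n+1})\le \bigl(\delta^{-1}A(x_n,x_{n+1})\bigr)^{1/r}\le \bigl(\delta^{-1}A(x_0,x_1)\bigr)^{1/r}\,\bigl(\lambda^{1/r}\bigr)^n .
\]
Because $\lambda^{1/r}\in(0,1)$, the series $\sum_n d(x_n,x_{n+1})$ converges. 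The triangle inequality for $d$ then shows that $(x_n)$ is $d$-Cauchy. By completeness, $x_n\to u$ for some $u\in X$. The key point in this step is that the estimate is taken on consecutive terms, which avoids any need to control $A(x_0,x_k)$ uniformly in $k$.

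Next we show $Tu=u$ without assuming continuity of $T$. This is where the compatibility condition \eqref{compatibility} is needed, and it is the main obstacle, since $A$ is not symmetric and only $A(x_n,u)\to 0$ is given, in that order of arguments. Applying \eqref{contractivecond} with $x=x_n$ and $y=u$ gives
\[
A(x_{n+1},Tu)\le \lambda A(x_n,u)\to 0 .
\]
Then \eqref{dominance} yields $d(x_{n+1},Tu)\le(\delta^{-1}A(x_{n+1},Tu))^{1/r}\to 0$, so $x_{n+1}\to Tu$ in $d$. By uniqueness of metric limits, $Tu=u$. The argument orientation must match the one in \eqref{compatibility}, and here it does.

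For uniqueness, suppose $Tu=u$ and $Tv=v$. Then \eqref{contractivecond} gives $A(u,v)=A(Tu,Tv)\le\lambda A(u,v)$. Since $A(u,v)$ is a finite nonnegative number and $\lambda<1$, this forces $A(u,v)=0$. By \eqref{dominance}, $\delta d(u,v)^r\le 0$, so $u=v$.
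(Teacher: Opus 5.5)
Your proof is correct and follows the paper's argument step for step. Both use Picard iteration with the consecutive-term estimate, the triangle inequality for $d$ to get a Cauchy sequence, the compatibility condition in the form $A(Tx_n,Tu)\le\lambda A(x_n,u)\to 0$ to identify the limit as a fixed point, and $A(u,v)\le\lambda A(u,v)$ for uniqueness. The only difference is that you carry the exponent $r$ through explicitly via the ratio $\lambda^{1/r}$, whereas the paper just writes ``without loss of generality $r=1$'' without justification (the reduction does hold by passing to $A^{1/r}$, $\delta^{1/r}$, $\lambda^{1/r}$), so your version is slightly more complete on that point.
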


\begin{proof}
Without loss of generality, assume $r=1$.  
Fix $x_0\in X$ and define the Picard sequence
\[
x_{n+1}=Tx_n, \quad n \ge 0.
\]
From the contractive condition (\ref{contractivecond}),
\[
A(x_n,x_{n+1})
=
A(Tx_{n-1},Tx_n)
\le
\lambda A(x_{n-1},x_n).
\]
Inductively,
\[
A(x_n,x_{n+1})
\le
\lambda^n A(x_0,x_1).
\]
Given condition (\ref{dominance}), we obtain
\[
d(x_n,x_{n+1})
\le
\frac1\delta A(x_n,x_{n+1})
\le
\frac1\delta \lambda^n A(x_0,x_1).
\]
Now let $m>n$. By the triangle inequality, we have the inequality
\[
d(x_n,x_m)
\le
\sum_{k=n}^{m-1} d(x_k,x_{k+1}).
\]
Hence,
\[
d(x_n,x_m)
\le
\frac{A(x_0,x_1)}{\delta}
\sum_{k=n}^{m-1}\lambda^k,
\]
and since $\lambda\in(0,1)$, the geometric series converges and therefore
$(x_n)$ is a Cauchy sequence. Since $(X,d)$ is complete,
there exists $u\in X$ such that
\[
x_n\to u.
\]
Next,
\[
A(Tx_n,Tu)
\le
\lambda A(x_n,u), \quad n \geq 0.
\]
By assumption (\ref{compatibility}),
\[
A(x_n,u)\to 0,
\]
hence
\[
A(Tx_n,Tu)\to 0.
\]
Using again the domination condition (\ref{dominance}),
\[
d(Tx_n,Tu)
\le
\frac1\delta A(Tx_n,Tu)\to 0,
\]
hence
\[
Tx_n=x_{n+1}\to Tu.
\]
It follows from uniqueness of limits that
\[
Tu=u.
\]
Finally, if $u$ and $v$ are fixed points of $T$, then
\(
A(u,v)
=
A(Tu,Tv)
\le
\lambda A(u,v).
\)
Since $\lambda\in(0,1)$, we have that
\(
A(u,v)=0,
\)
and by condition (\ref{dominance}),
\(
0=A(u,v)\ge \delta d(u,v).
\)
Thus,
\(
d(u,v)=0,
\) 
and $u=v$.
\end{proof}

\begin{remark}
The domination condition (\ref{dominance}) plays a crucial role. The theorem fails when $\delta=0$. Indeed,
let
\(
X=\mathbb{R}
\) 
be endowed with the usual metric 
\(
d(x,y)=|x-y|,
\)
and define
\[
T(x)=x+1.
\]
Clearly, $T$ has no fixed point, yet $T$ contracts the map $A$ defined by
\[
A(x,y)=e^{-(x+y)},
\]
as
\[
A(Tx,Ty)
=
e^{-((x+1)+(y+1))}
=
e^{-2}A(x,y),
\]
Notice that
\[
\frac{A(x,y)}{d(x,y)}
=
\frac{e^{-(x+y)}}{|x-y|}
>0
\]
for all $x\neq y$, but there exists no constant $\delta>0$ such that
\[
A(x,y)\ge \delta d(x,y)
\]
for all $x\neq y$. Thus condition (\ref{dominance}) is essential.
\end{remark}
\noindent
Theorem \ref{main} provides a general framework for the existence of fixed points of maps defined on a complete metric space which contract non-metrical quantities related to the underlying metric in the space by a domination condition, and unifies recent concepts in parallel literature:
\begin{enumerate}
\item If one takes 
\begin{equation}
A(x,y)=d(x,y)+P(x,y), \quad x,y \in X,
\end{equation}
where $(X,d)$ is a complete metric space and $P$ is non-negative, $(X,A,P)$ is a perturbed metric space and Theorem 3.1 in \cite{J25} is retrieved (when $\delta=r=1$).
\item It turns out that Theorem \ref{main} goes beyond perturbed metric spaces. It also generalizes Theorem 2.1 of 
Jleli, Pacurar, and Samet \cite{ETAL25} on \textit{polynomial contractions} - which satisfy our contractive condition (\ref{contractivecond}) in the special case where
\begin{equation}
A(x,y):=\sum_{i=0}^k a_i(x,y)d^i(x,y)
\end{equation}
with $k \geq 1$ a natural number, and $(a_i)_{i=0}^k$ a family of
non-negative valued maps defined on $X \times X$. With their further assumption that  
$$a_j(x,y) \ge A_j, \quad x,y \in X,$$
for some $j \in \{1,2,\ldots,k\}$ and $A_j>0$, the conditions of our Theorem \ref{main} are satisfied.
\item In general, for positive constants $\delta$ and $r$, and a map $g: [0,\infty) \to [0,\infty)$ continuous at $0$ with $g(0)=0$, Theorem \ref{main} holds for
\begin{equation}
A(x,y)=f(d(x,y)),
\end{equation}
where 
\[
f(t)=\delta t^r+g(t).
\] 
\end{enumerate}
\noindent
An important feature of the preceding theorem is that the function $A$ is not required to satisfy the triangle inequality or even symmetry. Thus $A$ may be interpreted more generally as a discrepancy functional or divergence measure. This viewpoint aligns naturally with several modern notions arising in information geometry and optimization theory, which are often not metrics, yet they quantify statistical or informational separation between states.
\\
\\
The theorem above therefore shows that Banach-type fixed point phenomena persist whenever the contracted discrepancy functional dominates the underlying geometry of the space. The inequality (\ref{contractivecond}) may be interpreted as an exponential dissipation of discrepancy under the action of the map $T$, while the domination condition (\ref{dominance}) prevents the discrepancy functional from becoming geometrically degenerate. In particular, small discrepancy necessarily implies
small metric separation. Consequently, the dynamics cannot drift indefinitely while simultaneously decreasing discrepancy, forcing convergence toward a unique equilibrium state.
\\
\\
The significance of the approach in this paper lies in its conceptual simplicity. Rather than introducing yet another generalized metric space, we isolate the property actually used in the proof of Banach's theorem. 
The present work suggests that many recent extensions of Banach's contraction principle may be viewed through discrepancy functionals. Future work will investigate analogous results for multivalued mappings, ordered spaces, and O-metric spaces, as well as applications to optimization and information geometry.

\section*{Authors' Statements}

\subsection*{Conflict of interest}
The author states no conflict of interest.

\subsection*{Availability of data and materials} 
Not Applicable

\subsection*{Funding} 
 None	
%

\subsection*{Acknowledgments}
 None

\end{document}